\newtheorem{thm}{Theorem}[section]
\newtheorem{prop}[thm]{Proposition}
\newtheorem{lemma}[thm]{Lemma}
\newtheorem{cor}[thm]{Corollary}
\theoremstyle{remark} 
\newtheorem{rem}[thm]{Remark}
\newtheorem{paragr}[thm]{}
\theoremstyle{definition} 
\theoremstyle{plain}
\newcommand{\derR}{\mathrm{R}}
\newcommand{\To}{\rightarrow}
\newcommand{\Hom}{\operatorname{\mathrm{Hom}}}
\newcommand{\CC}{\mathbf{C}}
\newcommand{\derL}{\mathrm{L}}
\renewcommand{\AA}{\mathbf{A}}
\newcommand{\ZZ}{\mathbf{Z}}
\newcommand{\QQ}{\mathbf{Q}}
\newcommand{\et}{\mathrm{\acute{e}t}}
\newcommand{\rig}{\mathrm{rig}}
\newcommand{\coh}{\mathrm{H}}
\newcommand{\spec}[1]{\mathrm{Spec}(#1)}
\newcommand{\DM}{\mathrm{DM}}
\newcommand{\DA}{\mathrm{DA}}
\newcommand{\Der}{\mathrm{D}}
\newcommand{\h}{\mathit{h}}
\newcommand{\gm}{\mathit{gm}}
\newcommand{\Cat}[1]{\mathcal{#1}}
\def\TO#1{\mathrel{\hbox to #1mm{\rightarrowfill}}}
\def\OT#1{\mathrel{\hbox to #1mm{\leftarrowfill}}}
\numberwithin{equation}{thm}
\title{Completion of motivic sheaves}
\author[D.-C. Cisinski]{Denis-Charles Cisinski}
\address{Fakult{\"a}t f{\"u}r Mathematik,
	Universit{\"a}t Regensburg,
	93040 Regensburg,
	Germany}
\email{denis-charles.cisinski@ur.de}
\urladdr{https://cisinski.app.uni-regensburg.de/}
\thanks{Research supported in part by the SFB 1085 ``Higher Invariants'',
funded by the Deutsche Forschungsgemeinschaft (DFG)}
\begin{document}
\begin{abstract}
We study the process of $\ell$-adic completion of motivic sheaves.
We observe that, in equal characteristic,
when restricted to constructible objets,
it is compatible with the six operations.
This implies that one can reconstruct $\ell$-adic sheaves of geometric origin
over a scheme of finite type over a field from $\ell$-adic cohomology
of smooth schemes. In the case of finite fields, this includes perverse $\ell$-adic
sheaves of geometric orgin. However, the analogous behaviour fails systematically
in mixed characteristic: the reason is that it would imply strong independence of
$\ell$ results that can be proven to be too optimistic.
\end{abstract}
%
%
\maketitle
\section*{Introduction}

It known that there is a strong analogy between motives and abelian
groups: abelian groups are motives in complex analytic geometry.
It is thus tempting to try to draw a parallel between
derived categories of motives and the derived category of abelian groups.
An important feature of motivic sheaves, at least if we work locally for the
\'etale topology, is that (derived) $\ell$-adic completion yields
$\ell$-adic realization functors (for $\ell$ a prime number distinct from
the residue characteristics of the base). Therefore, completions of motivic sheaves
play a central role in the motivic story, and it is natural to understand
the general structural properties that one can observe through completions
in this setting.
In the context of abelian groups, derived $\ell$-adic completion has a nice
behavior on perfect complexes: given a perfect complex of $\ZZ$-modules $E$,
since $E$ is dualizable, the derived tensor product by $E$ is a right adjoint
and we see that the $\ell$-adic completion of $E$ is isomorphism to the tensor
product of $E$ with the $\ell$-adic completion of $\ZZ$:
\[
\ZZ_\ell\otimes^\derL E\cong E_\ell
\]
Given a scheme $X$ and a motivic sheaf $M$, we investigate whether
the canonical comparison map
\[
\ZZ_\ell\otimes^\derL M\to M_\ell
\]
is invertible in $\DM(X)$. In the first section, we prove that for $X$
of finite type over a field and $M$ constructible, this is indeed the case
(Proposition~\ref{prop:petit everest}).
The proof uses de Jong's alteration theorem and consists in proving
another structural property of $\ell$-adic completion: given
any morphism $f\colon X\to Y$ of schemes of finite type over a field,
the pullback functor
\[
f^*\colon\DM(Y)\to\DM(X)
\]
commutes with derived $\ell$-adic completion of constructible objects.
A formal consequence of this story is that, given $\ell$-adic cohomology
seen as a cohomology theory of smooth schemes over a field $k$, one
can reconstruct the six-functor formalism of $\ell$-adic sheaves of geometric
origin for schemes of finite type over $k$ (Theorem~\ref{thm:everest categorique}). 
Furthermore, at least if $k$ is the algebraic closure of a finite field,
using Deligne's theory of weights,
this reconstruction theorem is compatible with the perverse $t$-structure
(Corollary~\ref{cor:geometric perverse}), and one can develop of Weil-\'etale
version of this story (Remark~\ref{rem:Weil etale}).

It is thus natural to ask whether these nice structural properties can be
proved in mixed characteristic. And this may be the most
important contribution of this article; the answer is:
not at all, neither of them (Theorem~\ref{thm:non-everest}). The proof is not constructive:
we prove that it would imply the existence of a comparison isomorphism
between $\ell$-adic cohomology and rigid cohomology (for schemes over a field
of characteristic $p\neq\ell$), with structural properties that would be too good
to be true (the argument is somehow a variation on the non-existence of
a Weil cohomology with $\QQ$-coefficients over a big enough
field of positive characteristic, using products of supersingular elliptic curves).\\

At the end of the day, this means that we are facing the following situation.
Consider a noetherian scheme $S$ of dimension $1$ and a closed point $s$ in $S$ in the regular locus of $S$. We assume that
the residue field $k$ of $s$ is
algebraically closed and of positive 
characteristic, and that the local ring $\mathcal{O}_{S,s}$ has
at most the cardinality of the continuum. If $\ell$ is any prime number
distinct from the charactersitc of $k$ and $i\colon s\to S$ the inclusion map,
there is a motivic sheaf $i^*(\ZZ_{\ell,S})$ over $s$
equipped with a map to ordinary $\ell$-adic cohomology of smooth $k$-schemes
$i^*(\ZZ_{\ell,S})\to\ZZ_{\ell,s}$
that we can embed in a fiber sequence:
\[
E\to i^*(\ZZ_{\ell,S})\to\ZZ_{\ell,s}
\]
We can then define, for any $k$-scheme
of finite type $X$, motivic cohomology of $X$ with coefficients in $E$
through 
\[
\coh^j(X,E(j))=\coh^j(\derR\Hom_{\DM(X)}(\QQ_X,E_X(j)))
\]
where $E_X$ denotes the pullback of $E$ from $s=\spec k$ to $X$.
Note that, since the map $i^*(\ZZ_{\ell,S})\to\ZZ_{\ell,s}$
induces an isomorphism after $\ell$-adic completion, $E$ has to be
$\QQ$-linear. If ever $S$ is of finite type over $k$, then we get
$\coh^j(X,E(j))\cong 0$ for all $X$ and any indices $i,j$
(this is what Proposition~\ref{prop:petit everest} means).
But if $S$ is generically of characteristic zero, then this cohomology
theory is not trivial, by Theorem~\ref{thm:non-everest}.
Constructing explicit non-trivial elements in cohomology groups
of the form $\coh^j(X,E(j))$ is a challenging question:
this is purely of arithmetic nature, apparently violating the analogy between
curves and rings of integers.\\

\noindent\emph{Acknowledgements.}
This work started a long time ago, while I undertook my motivic journey
with Fr\'ed\'eric D\'eglise.
The last paragraph is based on discussions with
Daniel Litt and Alexander Petrov.

\section{Motivic pullbacks of $\ell$-adic spectra}
\begin{paragr}
Given a scheme $X$ and a commutative ring $\Lambda$,
there is the stable presentable $\infty$-category of $\Lambda$-linear
\'etale motives $\DM(X,\Lambda)$. It can be constructed as the $\ZZ(1)$-stabilization
of a $\AA^1$-homotopy invariant \'etale hypersheaves of complexes of $\Lambda$-modules
on the smooth site of $X$, as in \cite{AyoubEtale},
or in terms of hypersheaves for the
$\h$-topology, as in \cite{CD4}.
They are equivalent since both
are known to agree with rational coefficients
(by \cite[theorems 16.1.2 and 16.2.18]{CD3})
as well as with torsion coefficients because both satisfy a suitable
version of the rigidity theorem; see
\cite[Cor. 5.5.4]{CD4} and
\cite[theorem 3.1]{Bachmann}. Recalling this equivalence is not innocent:
we will use freely, without warning, that any cohomology theory
that is representable in \'etale motivic sheaves through the six operations
automatically satisfies $\h$-hyperdescent.
Another important aspect of the description of motivic sheaves
through $\h$-hypersheaves is the following property.
\begin{lemma}\label{lemma:left adjoint h-locally}
Let $k$ be any field. Then, for any $k$-scheme of finite type $X$,
with structural map
$f\colon X\to\spec k$, the pullback
functor
\[
f^*\colon\DM(\spec k,\ZZ)\to\DM(X,\ZZ)
\]
has a left adjoint. In particular, it preserves limits.
\end{lemma}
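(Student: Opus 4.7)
The plan is to construct a left adjoint to $f^*$ explicitly, by $h$-descending the classical ``smooth $f_\sharp$'' of the six-functor formalism. By de Jong's alteration theorem, refined coskeletally, one produces an $h$-hypercover $\pi_\bullet\colon X_\bullet\to X$ with each $X_n$ smooth (and quasi-projective) over $k$. Invoking the $h$-hyperdescent for $\DM(-,\ZZ)$ advertised just above, we obtain an equivalence
\[
\DM(X,\ZZ)\simeq\lim_{[n]\in\cats}\DM(X_n,\ZZ),
\]
under which $N\in\DM(X,\ZZ)$ is realized as a compatible family $\{N_n\}_n$ with descent isomorphisms $\alpha^*N_n\simeq N_m$ for each transition $\alpha\colon X_m\to X_n$, and $f^*M$ corresponds to the family $\{f_n^*M\}_n$ with $f_n:=f\circ\pi_n\colon X_n\to\spec k$.

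Since each $X_n$ is smooth over $k$, each $f_n$ is smooth, and hence $f_n^*$ admits a left adjoint $f_{n\sharp}$ via the smooth-pushforward construction. The identity $f_m^*=\alpha^*\circ f_n^*$ together with the two adjunctions $f_{n\sharp}\dashv f_n^*$ and $f_{m\sharp}\dashv\alpha^*\circ f_n^*$ yields a canonical exchange transformation $f_{m\sharp}\circ\alpha^*\to f_{n\sharp}$, as the mate of the identity on $\alpha^*\circ f_n^*$, \emph{without any smoothness hypothesis on $\alpha$}. Applied to the descent data, these mates assemble $\{f_{n\sharp}N_n\}_n$ into a functor $\catsop\to\DM(\spec k,\ZZ)$, and we set
\[
f_\sharp N\;:=\;\operatorname*{colim}_{[n]\in\catsop} f_{n\sharp}N_n.
\]
The adjunction $f_\sharp\dashv f^*$ then drops out of the descent formula for mapping spaces:
\[
\Hom_{\DM(X)}(N,f^*M)\simeq \lim_{[n]\in\cats}\Hom_{\DM(X_n)}(N_n,f_n^*M)\simeq \lim_{[n]\in\cats}\Hom_{\DM(\spec k)}(f_{n\sharp}N_n,M)\simeq \Hom_{\DM(\spec k)}(f_\sharp N,M),
\]
and the final assertion of the lemma is then formal.

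The principal obstacle is the $\infty$-categorical coherence required to upgrade the pointwise mates $f_{m\sharp}\circ\alpha^*\to f_{n\sharp}$ into a genuine functor $\catsop\to\DM(\spec k,\ZZ)$; this is the standard, if somewhat technical, mate correspondence for adjunctions in $\infty$-categories applied to the simplicial family of adjunctions above. The hypothesis $Y=\spec k$ enters precisely at the step where each $f_n$ is required to be smooth: it is what allows us to choose a smooth $h$-hypercover of $X$ whose terms all have smooth structural maps to the base.
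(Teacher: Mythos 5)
Your argument takes a genuinely different route from the paper's. The paper works with the ambient $\infty$-category $\underline{\DM}(X)$ of $h$-hypersheaves on the site of \emph{all} separated finite-type $X$-schemes, not just the smooth ones: on that enlarged site the pullback $\underline{f}^*$ is restriction along a morphism of sites and therefore always has a left adjoint (a left Kan extension followed by localization), with no smoothness hypothesis on $f$ whatsoever. The only content is then to check that this left adjoint lands in $\DM(\spec k,\ZZ)\subset\underline{\DM}(\spec k)$, which is automatic because, for $k$ perfect, de Jong's theorem gives $\DM(\spec k,\ZZ)\cong\underline{\DM}(\spec k)$. You instead stay inside the smooth-site category $\DM$ and build $f_\sharp$ by hand from a smooth $h$-hypercover, gluing the smooth pushforwards $f_{n\sharp}$ along the mate transformations and taking a geometric realization. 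Both routes run on de Jong plus $h$-hyperdescent, but the paper's passage to $\underline{\DM}$ is precisely what lets it sidestep the coherence problem you flag: upgrading the pointwise mates $f_{m\sharp}\alpha^*\to f_{n\sharp}$ to a coherent functor $\catsop\to\DM(\spec k,\ZZ)$ is a nontrivial piece of $\infty$-categorical bookkeeping (relative adjunctions over $\cats$, or a cocartesian-fibration argument), and it is the part of your proof that demands the most care.

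Independently of that coherence issue, there is one genuine gap. You assert that de Jong produces an $h$-hypercover $X_\bullet\to X$ with each $X_n$ \emph{smooth over $k$}, and you rely on this to have $f_n$ smooth. Over an imperfect field this is false: de Jong's theorem gives an alteration $X'\to X$ with $X'$ smooth over a finite, in general purely inseparable, extension $k'/k$, and one cannot in general arrange $X'$ to be smooth over $k$ itself. So the structural maps $f_n\colon X_n\to\spec k$ need not be smooth and $f_{n\sharp}$ need not exist. The paper deals with this at the very first step by invoking topological invariance of $\DM$ along universal homeomorphisms to reduce to the case of a perfect base field; your proof should begin the same way. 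With that reduction in place (and the mate coherence established), your construction should go through.
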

\begin{proof}
By topological invariance~\cite[Prop.~6.3.16]{CD4}
we may assume that $k$ is perfect.
Given any noetherian $k$-scheme $X$,
let $\underline{\DM}(X)$ be the
$\ZZ(1)$-stabilization
of $\AA^1$-homotopy invariant $\h$-hypersheaves of complexes of
abelian groups on the site of separated schemes of finite type over $X$.
We may identify $\DM(X,\ZZ)$ as the smallest full stable $\infty$-category
of $\underline{\DM}(X)$
closed under small colimits and Tate twists
containing motives of separated smooth schemes of finite over $X$;
see \cite[Def.~5.1.3]{CD4}.
This identification is compatible with the formation of pullback
functors. On the other hand, since $k$ is perfect,
de Jong's alteration theorem~\cite{dJ}
implies
\[
\DM(\spec k,\ZZ)\cong\underline{\DM}(\spec k)\, .
\]
It is an easy exercise to see that the left adjoint of
\[
f^*\colon\DM(\spec k,\ZZ)\cong\underline{\DM}(\spec k)\to\underline{\DM}(X)
\]
restricts on $\DM(X,\ZZ)$ to a left adjoint of the pullback functor $f^*$
in $\DM$. 
\end{proof}

Let $\ell$ be a prime number distinct from the residue
characteristics of $X$.
and $\ZZ_{(\ell)}$ the localization 
of the ring of integers $\ZZ$ at the prime ideal $(\ell)$.
The $\infty$-category $\Der(X,\ZZ_\ell)$ of $\ell$-adic sheaves on $X$
is defined as the quotient of $\DM(X,\ZZ_{(\ell)})$ by the
subcategory of those motivic sheaves $M$ such that
$M\otimes^\derL\ZZ/\ell\ZZ\cong 0$.
By virtue of \cite[Prop.~7.2.21]{CD4},
the triangulated category associated to the stable
category $\Der(X,\ZZ_\ell)$ defined above
is canonically equivalent to
Ekedahl's triangulated category of $\ell$-adic sheaves~\cite{Eke}.

The localization functor
\begin{equation}
\DM(X,\ZZ)\To \Der(X,\ZZ_\ell)
\end{equation}
has a fully faithful right adjoint which identifies $\Der(X,\ZZ_\ell)$ with the full subcategory of \emph{$\ell$-complete motivic sheaves}: those $M$ in $\DM(X,\ZZ)$ such that the canonical map
\begin{equation}
M\To M_\ell=\derR\varprojlim_\nu M\otimes^\derL\ZZ/\ell^\nu\ZZ
\end{equation}
is an isomorphism. We define
\begin{equation}
\ZZ_{\ell,X}=(\ZZ_X)_\ell
\end{equation}
as the $\ell$-adic completion of the constant motivic sheaf $\ZZ_X$ on $X$.
by virtue of the rigidity theorem~\cite[Prop.~7.2.21]{CD4},
this is an $E_\infty$-ring spectrum which represents
continuous $\ell$-adic cohomology of smooth $X$-schemes.

For any motivic sheaf $M$ in $\DM(X,\ZZ)$, there is a canonical
comparison map
\begin{equation}\label{equa:l-completion}
\ZZ_{\ell,X}\otimes^\derL M\To M_\ell\, .
\end{equation}
\end{paragr}
\begin{prop}\label{prop:petit everest}
Let $k$ be a field
and $\ell$ a prime number.
The following properties hold.
\begin{itemize}
\item[(i)] For any $k$-scheme of finite type $X$ any any
constructible motivic sheaf $M$ on $X$, there is a canonical
isomorphism $\ZZ_{\ell,X}\otimes^\derL M\cong M_\ell$.
\item[(ii)] For any morphism $f:X\to Y$ of $k$-schemes of finite type,
the pull-back functor $f^*:\DM(Y,\ZZ)\to\DM(X,\ZZ)$
induces an isomorphism
$$f^*(\ZZ_{\ell,Y})\cong\ZZ_{\ell,X}\, .$$
\end{itemize}
\end{prop}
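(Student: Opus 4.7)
The plan is to establish (ii) first, as it is a formal consequence of Lemma~\ref{lemma:left adjoint h-locally}, and then to deduce (i) by reducing, via de Jong alterations, to the essentially trivial case of Tate twists on smooth $k$-schemes.

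For (ii), write $p\colon X\to\spec k$ and $q\colon Y\to\spec k$ for the structural morphisms, so that $p = q\circ f$. By Lemma~\ref{lemma:left adjoint h-locally}, $p^*$ and $q^*$ preserve arbitrary limits; being symmetric monoidal and sending $\ZZ$ to $\ZZ$, they also send $\ZZ/\ell^\nu$ to $\ZZ/\ell^\nu$. Consequently
\[
\ZZ_{\ell,Y} = \derR\varprojlim_\nu q^*(\ZZ/\ell^\nu_{\spec k}) \cong q^*\,\ZZ_{\ell,\spec k},
\]
and similarly $\ZZ_{\ell,X}\cong p^*\,\ZZ_{\ell,\spec k}$; applying $f^*$ to the first isomorphism and using $f^*q^* = p^*$ yields $f^*\ZZ_{\ell,Y}\cong \ZZ_{\ell,X}$.

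For (i), let $\mathcal{C}_X\subset\DM(X,\ZZ)$ denote the full subcategory of objects $M$ for which the comparison map~\eqref{equa:l-completion} is invertible; it is a thick subcategory because both sides are exact functors of $M$. First, for any $k$-scheme of finite type $Y$ and any integers $n,m$, the Tate twist $\ZZ_Y(n)[m]$ lies in $\mathcal{C}_Y$ trivially, since $\ZZ(n)[m]$ is $\otimes$-invertible so tensoring with it commutes with both $\otimes\ZZ_{\ell,Y}$ and $(-)_\ell$, reducing both sides of~\eqref{equa:l-completion} to $\ZZ_{\ell,Y}(n)[m]$. Second, for a proper morphism $p\colon Y\to X$ of $k$-schemes of finite type, $p_*$ sends $\mathcal{C}_Y$ into $\mathcal{C}_X$: indeed, for $N\in\mathcal{C}_Y$, the projection formula combined with (ii) gives
\[
\ZZ_{\ell,X}\otimes^\derL p_*N \cong p_*(p^*\ZZ_{\ell,X}\otimes^\derL N) \cong p_*(\ZZ_{\ell,Y}\otimes^\derL N) \cong p_*(N_\ell),
\]
while the projection formula applied with $\ZZ/\ell^\nu$, together with the fact that the right adjoint $p_*$ commutes with the homotopy limit defining $(-)_\ell$, shows $p_*(N_\ell)\cong (p_*N)_\ell$.

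It follows that the thick subcategory of $\DM(X,\ZZ)$ generated by $p_*\ZZ_Y(n)[m]$ for $p\colon Y\to X$ proper with $Y$ smooth over $k$ is contained in $\mathcal{C}_X$. The proof of (i) is then completed by the standard density statement that this thick subcategory exhausts all constructible objects of $\DM(X,\ZZ)$, which is exactly where de Jong's alteration theorem~\cite{dJ} enters, via a noetherian induction on $\dim X$ and closed/open decompositions to reduce singular pieces to proper pushforwards from smooth ones. Apart from this density step, the argument is formal, resting on the projection formula, symmetric monoidality, and the limit-preservation provided by Lemma~\ref{lemma:left adjoint h-locally}; the identification of the generation class with the constructible subcategory is the main technical input of the proof.
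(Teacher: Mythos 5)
Your proof is correct and takes essentially the same route as the paper: establish (ii) directly from Lemma~\ref{lemma:left adjoint h-locally} by commuting $f^*$ past the homotopy limit, then deduce (i) from the projection formula together with thickness of the class where \eqref{equa:l-completion} is invertible and generation of constructibles by proper pushforwards of Tate twists. One minor remark: the paper uses generation by $f_*(\ZZ_U)(n)$ with $U$ merely \emph{proper} over $X$ (no smoothness required, since your own observation that $\ZZ_U(n)[m]\in\mathcal{C}_U$ holds for all $U$ of finite type), so de Jong's theorem enters only through Lemma~\ref{lemma:left adjoint h-locally}, not through the density step you attribute it to.
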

\begin{proof}
To prove (i), we observe that the class of those $M$ such that the canonical
map \eqref{equa:l-completion} is invertible is closed under finite (co)limits
and direct factors. Since constructible motivic sheaves are generated
by those of the form $f_*(\ZZ_U)(n)$ for a proper map $f:U\to X$
and an integer $n$, and since all functors of interest commute with Tate twists
(see e.g. \cite[Prop.~4.2.13]{CD3}),
it suffices to prove the particular case
where $M=f_*(\ZZ_U)$ for a proper map $f:U\to X$.
We have then the projection formula:
\begin{align*}
\ZZ_{\ell,X}\otimes^\derL M&\cong\ZZ_{\ell,X}\otimes^\derL f_*(\ZZ_U)\\
&\cong f_*f^*(\ZZ_{\ell,X})\, .
\end{align*}
Therefore, since $f_*$ commutes with limits,
it is sufficient to prove property (ii) in the case where $f$ is proper
to prove property (i).

We will now prove property (ii).
Let $p:X\To\spec k$ and $q:Y\To\spec k$ be the structural maps.
If we know property (ii) for $p$ and $q$, namely,
$$p^*(\ZZ_{\ell,k})\cong\ZZ_{\ell,X}
\qquad
q^*(\ZZ_{\ell,k})\cong\ZZ_{\ell,Y}\, ,$$
we deduce
$$f^*(\ZZ_{\ell,Y})\cong f^*q^*(\ZZ_{\ell,k})\cong
p^*(\ZZ_{\ell,k})\cong\ZZ_{\ell,X}\, .$$
Therefore, it is sufficient to prove the case where $Y=\spec k$,
which follows immediately from
Lemma~\ref{lemma:left adjoint h-locally}.
\end{proof}
\begin{rem}
The proof of the previous proposition provides a more general formula:
it says that, given any small category $J$ and any $J$-indexed functor
\[
j\mapsto A_j
\]
with values in the $\infty$-category $\DM(k,\ZZ)$, for any morphism of finite type
\[
f\colon X\to\spec k
\]
and any constructible motivic sheaf $M$ on $X$, the canonical map
\[
M\otimes^\derL f^*\Big(\derR\varprojlim_{j\in J}A_j\Big)
\to
\derR\varprojlim_{j\in J}M\otimes^\derL f^*(A_j)
\]
is an equivalence. This is non-trivial since neither the functor $f^*$
or $M\otimes^\derL(-)$ have a right adjoint.
\end{rem}
\begin{paragr}
The meaning of the previous proposition is that one can describe
$\ell$-adic sheaves of geometric origin from motivic sheaves
and $\ell$-adic cohomology of smooth schemes over $k$ as follows.
For a scheme of finite type $X$ over $k$, we define the
following stable
$\infty$-category:
\[
\DM(X,\ZZ_{\ell,X})=\{\text{$\ZZ_{\ell,X}$-modules in $\DM(X,\ZZ)$}\}\, .
\]
Since $\ZZ_X$ and $\ZZ_{\ell,X}$ agree modulo $\ell$ by definition,
we see that the full subcategory of $\ell$-complete objects
in $\DM(X,\ZZ_{\ell,X})$ is canonically equivalent to the
full subcategory of $\ell$-complete objects in $\DM(X,\ZZ)$, namely
$\Der(X,\ZZ_\ell)$.
We denote by $\Der^b_\gm(X,\ZZ_\ell)$ the full
subcategory of $\DM(X,\ZZ_{\ell,X})$ spanned by
constructible objects in the motivic sense (i.e. the smallest thick stable
subcategory containing objects of the form
$\ZZ_{\ell,X}\otimes M$ for a constructible motivic sheaf $M$
in $\DM(X,\ZZ)$).
\end{paragr}
\begin{prop}\label{prop:ladic fully faithful}
For any scheme of finite type $X$ over a field $k$, 
for any prime $\ell\neq\mathit{char}(k)$, the
$\ell$-adic completion functor induces a fully faithful exact
functor
\[
\Der^b_\gm(X,\ZZ_\ell)\to\Der^b_c(X,\ZZ_\ell)\, .
\]
The essential image consists is the smallest
thick stable subcategory of the ordinary stable $\infty$-category of
constructible $\ell$-adic sheaves containing objects of the form
$f_!f^!(\ZZ_\ell)(n)$ with $f\colon Y\to X$ smooth and $n$ any integer.
Furthermore, the subcategory $\Der^b_\gm(X,\ZZ_\ell)$ is closed under
the six operations in $\Der^b_c(X,\ZZ_\ell)$.
\end{prop}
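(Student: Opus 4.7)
The plan is to leverage Proposition~\ref{prop:petit everest} in both of its forms. The decisive initial observation is part~(i): the canonical generators of $\Der^b_\gm(X,\ZZ_\ell)$, namely the objects $\ZZ_{\ell,X}\otimes^\derL M$ for $M\in\DM(X,\ZZ)$ constructible, already coincide with $M_\ell$, and in particular are $\ell$-complete in $\DM(X,\ZZ)$. Consequently the $\ell$-adic completion functor is essentially the identity on this generating class.

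To establish full faithfulness, I would compute $\Hom$ groups between two such generators. Composing the free-forget adjunction for the scalar extension $\ZZ_X\to\ZZ_{\ell,X}$ with the adjunction between $\ell$-completion and the inclusion $\Der(X,\ZZ_\ell)\subset\DM(X,\ZZ)$ yields
\[
\Hom_{\DM(X,\ZZ_{\ell,X})}(\ZZ_{\ell,X}\otimes^\derL M,\,\ZZ_{\ell,X}\otimes^\derL N)\cong\Hom_{\DM(X,\ZZ)}(M,N_\ell)\cong\Hom_{\Der(X,\ZZ_\ell)}(M_\ell,N_\ell),
\]
which is exactly the Hom between the images in $\Der^b_c(X,\ZZ_\ell)$. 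Full faithfulness on a generating set together with exactness propagates to all of $\Der^b_\gm$. For the essential image, recall that constructible motivic sheaves are generated, as a thick subcategory closed under Tate twists, by objects $f_!f^!(\ZZ_X)$ for smooth morphisms $f\colon Y\to X$ (these are the duals of the $f_\sharp\ZZ_Y$ of~\cite{CD4}); combining the projection formula with $f^*(\ZZ_{\ell,X})\cong\ZZ_{\ell,Y}$ from Proposition~\ref{prop:petit everest}(ii) gives $\ZZ_{\ell,X}\otimes^\derL f_!f^!(\ZZ_X)(n)\cong f_!f^!(\ZZ_{\ell,X})(n)$, which after $\ell$-completion corresponds to $f_!f^!(\ZZ_\ell)(n)$ in $\Der^b_c(X,\ZZ_\ell)$, matching the generating set in the statement.

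For stability under the six operations, I would combine Proposition~\ref{prop:petit everest} with the projection formula and the fact that the motivic six operations preserve constructibility. For $f^*$ and $\otimes$ the desired commutation with tensoring by $\ZZ_{\ell,X}$ is immediate from~(ii) and~(i) respectively. For $f_!$, I would reduce via Nagata compactification $f=\bar f\circ j$ to an open immersion $j$ (where $j_!=j_\sharp$ preserves smooth generators by inspection) and a proper map $\bar f$ (where $\bar f_!=\bar f_*$ commutes with limits, hence with $\ell$-completion, while the projection formula combined with~(ii) gives $\bar f_*(\ZZ_{\ell,Y}\otimes^\derL M)\cong\ZZ_{\ell,X}\otimes^\derL \bar f_*(M)$). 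The remaining operations $f_*$, $f^!$ and internal Hom then follow by adjunction and by localization triangles for closed immersions. The main obstacle is precisely this six-operation bookkeeping: verifying that each motivic operation, after tensoring with $\ZZ_{\ell,X}$, agrees with the corresponding classical $\ell$-adic operation. Everything ultimately reduces to Proposition~\ref{prop:petit everest}(ii), which supplies the commutation of $f^*$ with $\ell$-completion on constructible objects.
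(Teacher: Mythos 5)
Your proof of full faithfulness is essentially the paper's: both reduce, via the free--forget adjunction for $\ZZ_X\to\ZZ_{\ell,X}$ and the adjunction between $\ell$-completion and the inclusion $\Der(X,\ZZ_\ell)\subset\DM(X,\ZZ)$, to the single identity $\ZZ_{\ell,X}\otimes^\derL N\cong N_\ell$ for $N$ constructible, which is Proposition~\ref{prop:petit everest}(i). Your identification of the essential image via the generators $f_!f^!(\ZZ_X)(n)=f_\sharp\ZZ_Y(n)$ for smooth $f$, combined with the projection formula, purity and part (ii) of the proposition, is also sound and actually spells out something the paper leaves implicit.

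Where you depart from the paper is the stability under the six operations. The paper disposes of this in one line by citing \cite[Thm.~7.2.16]{CD4}, which is precisely the statement that the $\ell$-adic realization functor commutes with the six operations on constructible objects. You instead attempt a direct argument: $f^*$ and $\otimes$ via parts (i) and (ii); $f_!$ by Nagata compactification, splitting into an open immersion (handled on generators via $j_!=j_\sharp$) and a proper morphism (via the projection formula and commutation of $\bar f_*$ with limits); then the remaining operations ``by adjunction and localization triangles.'' This last step is too quick. Knowing that $\bar f_*$ has an adjoint in each category does not by itself identify the two; for $f_*$ of a non-proper $f$, or for $j_*$ with $j$ an open immersion, preservation of constructibility and compatibility with realization are genuinely nontrivial (they rely on alterations/resolution input), and the same holds for $f^!$ along closed immersions and for internal $\HOM$. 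In effect, closing that gap would amount to re-proving the cited theorem of \cite{CD4}. The argument is not wrong, but it buys you nothing over the citation and would require substantially more care than you indicate for $f_*$, $f^!$ and $\HOM$.
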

\begin{proof}
For the fully faithfulness, it suffices to check it on generators:
for any constructible motivic sheaves $M$ and $N$ in $\DM(X,\ZZ)$,
we have to check that the map induced by $\ell$-adic completion
\[
\derR\Hom_{\Der^b_\gm(X,\ZZ_\ell)}(\ZZ_{\ell,X}\otimes^\derL M,\ZZ_{\ell,X}\otimes^\derL N)
\to \derR\Hom_{\Der^b_c(X,\ZZ_\ell)}(M_\ell,N_\ell)
\]
is an equivalence.
We have canonical identifications
\[
\derR\Hom_{\DM(X,\ZZ)}(M,\ZZ_{\ell,X}\otimes^\derL N)
\cong
\derR\Hom_{\Der^b_\gm(X,\ZZ_\ell)}(\ZZ_{\ell,X}\otimes^\derL M,\ZZ_{\ell,X}\otimes^\derL N)
\]
and
\begin{align*}
\derR\Hom_{\Der^b_c(X,\ZZ_\ell)}(M_\ell,N_\ell)
&\cong
\derR\varprojlim_\nu\derR\Hom_{\DM(X,\ZZ)}(M,\ZZ/\ell^\nu\otimes^\derL N)\\
&\cong\derR\Hom_{\DM(X,\ZZ)}(M,N_\ell) \, .
\end{align*}
In other words, we must check that the canonical map
\[
\ZZ_{\ell,X}\otimes^\derL N\to N_\ell
\]
is an equivalence whenever $N$ is constructible.
Therefore, the fully faithfulness property finally amounts to assertion (i) in the
previous proposition.
The stability property under the six operations essentially is a reformulation
of \cite[Thm.~7.2.16]{CD4}.
\end{proof}

\section{Perverse sheaves of geometric origin}
\begin{paragr}
Let $k$ be an algebraically closed field of
positive characteristic
and $\ell\neq \mathit{char}(k)$ a prime number.
Given any separated $k$-scheme of finite type $X$, we consider
\[
\overline\QQ_{\ell,X}=\QQ_{\ell,X}\otimes_{\QQ_\ell}\overline\QQ_\ell
\]
where $\overline\QQ_\ell$ is an algebraic closure of $\QQ_\ell$,
and we define
\[
\Der^b_\gm(X,\overline\QQ_\ell)
\]
as the $\infty$-category of compact objects in the
$\infty$-category of $\overline\QQ_{\ell,X}$-modules in
the category of motivic sheaves
$\DM(X,\ZZ)$.\\

Our aim is now to understand the structure of $\Der^b_\gm(X,\overline\QQ_\ell)$.
It is closed under the six operations and the $\ell$-adic realization functor
\begin{equation}\label{eq:comp etale vs rigid catbis}
\DM_{c}(X,\QQ)\to\Der^b_\gm(X,\overline\QQ_\ell)\ ,
\quad M\mapsto\overline{\QQ}_{\ell,X}\otimes M
\end{equation}
commutes with them.

Furthermore, $\Der^b_\gm(s,\overline\QQ_\ell)\cong\mathrm{Perf}(\overline\QQ_\ell)$ is the bounded derived $\infty$-category of finite dimensional $\overline\QQ_\ell$-vector
spaces; see \cite[Thm. 17.2.9 and 17.2.17]{CD3}. Concretely, this also means that
$\Der^b_\gm(X,\overline\QQ_\ell)$ is generated, as an idempotent complete stable
$\infty$-category, by objects of the form $f_*(\overline\QQ_{\ell,Y})(n)$
for $f:Y\to X$ projective and with $Y$ connected and smooth over $k$, $n\in\ZZ$;
see \cite[Cor. 4.4.3]{CD3}.

Let $\Der^b_c(X,\overline\QQ_\ell)$ be the usual
bounded stable $\infty$-category of constructible $\overline\QQ_\ell$-sheaves on $X$.
We can construct it in the following way: for each finite flat extension $R$ of $\ZZ_\ell$,
with uniformizer $l$, there is the stable $\infty$-category  $\Der^b_c(X,R_l)$
(obtained from \cite[7.2.20]{CD4} or, equivalently, from
\cite[Def. 6.3.1]{BS}) and we
form the filtered colimit of symmetric monoidal stable $\infty$-categories
\begin{equation}
\Der^b_c(X,\overline\ZZ_\ell)=\varinjlim_{\ZZ_\ell\subset R}\Der^b_c(X,R_l)\, .
\end{equation}
Finally, we can simply put
\begin{equation}\label{eq:construction Qell faisceaux}
\Der^b_c(X,\overline\QQ_\ell)=\Der^b_c(X,\overline\ZZ_\ell)\otimes\QQ\, .
\end{equation}
By virtue of \cite[Thm. 7.2.24]{CD4}, we have a realization functor
\begin{equation}\label{eq:comp etale vs rigid real}
\DM_{c}(X,\QQ)\to\Der^b_c(X,\overline\QQ_\ell)
\end{equation}
that commutes with the six operations. So far,
Constructions \eqref{eq:construction Qell faisceaux}
and \eqref{eq:comp etale vs rigid real}
are available for any noetherian scheme $X$
over $\ZZ[1/\ell]$ (and we should restrict to
quasi-excellent ones if we want the six operations to restrict to
constructible objects). Coming back to separated schemes of finite
type over $k$,
we see that the realization functor
\[
\DM(X,R)\to\Der(X,R_l)=\DM(X,\ZZ)\otimes_\ZZ\Der(R)^\wedge_{(l)}\, ,
\]
being a symmetric monoidal left adjoint, factors uniquely through
the $\infty$-category of $R_{l,X}$-modules (because $R_{l,X}$
is the image of the unit of $\Der(X,R_l)$ by the right
adjoint).
Restricting
to constructible objects and taking the colimit of the realizations
indexed by finite extensions $R$ of $\ZZ_\ell$,
we see that the functor \eqref{eq:comp etale vs rigid real} factors
uniquely through a functor
\begin{equation}\label{eq:comp etale vs rigid real2}
\Der^b_\gm(X,\overline\QQ_\ell)\to\Der^b_c(X,\overline\QQ_\ell)\, .
\end{equation}
that commutes with the six operations.\footnote{At some point,
the reader who is tired to always do the same
thing might want to rely on more systematic methods
to construct realization functors,
using the universal property of motivic sheaves proved by Drew and Gallauer
\cite{DrGa}, possibly considering ind-objects on $\Der^b_c(X,\overline\QQ_\ell)$.}
\end{paragr}
\begin{thm}\label{thm:everest categorique}
The functor $\Der^b_\gm(X,\overline\QQ_\ell)\to\Der^b_c(X,\overline\QQ_\ell)$
is fully faithful. It identifes the stable $\infty$-category
of compact modules over $\overline{\QQ}_{\ell,X}$ in $\DM(X)$ as the smallest
 thick stable subcategory of
$\Der^b_c(X,\overline\QQ_\ell)$ containing 
objects of the form $f_*(\overline\QQ_{\ell,Y})(n)$
for $f:Y\to X$ projective and with $Y$ connected and smooth over $k$, $n\in\ZZ$.
\end{thm}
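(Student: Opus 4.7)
The strategy is to deduce the theorem from an $R$-linear analogue of Proposition~\ref{prop:ladic fully faithful}, for $R$ a finite flat extension of $\ZZ_\ell$, and then to take the filtered colimit over such $R$ and rationalize. For any $R$ with uniformizer $l$, let $\Der^b_\gm(X,R)$ denote the $\infty$-category of compact $R_{l,X}$-modules in $\DM(X,\ZZ)$. I first claim that $\ell$-adic completion induces a fully faithful functor $\Der^b_\gm(X,R)\to\Der^b_c(X,R_l)$; the proof of Proposition~\ref{prop:ladic fully faithful} applies verbatim once one knows that, for every constructible $N$ in $\DM(X,\ZZ)$, the canonical map
\[
R_{l,X}\otimes^\derL N\to (N\otimes^\derL R)^\wedge_l
\]
is an equivalence. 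This follows from Proposition~\ref{prop:petit everest}(i): since $R$ is a finite free $\ZZ_\ell$-module, both sides decompose as a finite direct sum of copies of $\ZZ_{\ell,X}\otimes^\derL N$, respectively $N_\ell$, and the comparison splits accordingly into the $\ZZ_\ell$-linear one.

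Next, I would pass to the filtered colimit $\varinjlim_{\ZZ_\ell\subset R}$ and then tensor with $\QQ$. On the $\ell$-adic side this recovers $\Der^b_c(X,\overline{\QQ}_\ell)$ by definition. On the motivic side, it gives the $\infty$-category of compact $\overline{\QQ}_{\ell,X}$-modules in $\DM(X,\ZZ)$, i.e.\ $\Der^b_\gm(X,\overline{\QQ}_\ell)$. Fully faithfulness is preserved both by filtered colimits of stable $\infty$-categories along symmetric monoidal base change and by the localization $(-)\otimes\QQ$, yielding fully faithfulness of $\Der^b_\gm(X,\overline{\QQ}_\ell)\to\Der^b_c(X,\overline{\QQ}_\ell)$.

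For the description of the essential image, recall from the discussion preceding the theorem that $\Der^b_\gm(X,\overline{\QQ}_\ell)$ is generated, as an idempotent complete stable $\infty$-category, by the objects $f_*(\overline{\QQ}_{\ell,Y})(n)$ for $f\colon Y\to X$ projective, $Y$ smooth and connected, $n\in\ZZ$. Since the realization functor commutes with $f_*$ and Tate twists, it sends these generators to the corresponding objects in $\Der^b_c(X,\overline{\QQ}_\ell)$. Combined with fully faithfulness, this identifies the essential image with the smallest thick stable subcategory generated by them, as required. The point that will take the most care is the passage to the filtered colimit of the $R$-linear categories: one must match compactness on both sides, verifying that any compact $\overline{\QQ}_{\ell,X}$-module is, up to rationalization, obtained by extension of scalars from a compact $R_{l,X}$-module for some finite flat $R$, and similarly that Ekedahl's $\Der^b_c(X,\overline{\ZZ}_\ell)=\varinjlim_R\Der^b_c(X,R_l)$ agrees with the filtered colimit of the sources of our realization functors.
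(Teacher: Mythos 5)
Your proof is correct and takes essentially the same route as the paper: the paper simply asserts that Proposition~\ref{prop:ladic fully faithful} implies full faithfulness of \eqref{eq:comp etale vs rigid real2} and appeals to the generation statement from \cite[Cor.~4.4.3]{CD3} for the description of the image, whereas you make explicit the implicit passage from $\ZZ_\ell$ to $\overline\QQ_\ell$ (extending Proposition~\ref{prop:ladic fully faithful} to finite flat $R\supset\ZZ_\ell$ by finite-freeness, then taking the filtered colimit over $R$ and rationalizing, with full faithfulness preserved at each step). This is precisely the step the paper treats as routine, and your second part matches the paper's argument verbatim.
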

\begin{proof}
We observe that Proposition \ref{prop:ladic fully faithful}
implies that \eqref{eq:comp etale vs rigid real2}
is fully faithful (we can also
reproduce the proof of \cite[Thm. 17.1.5]{CD3}).
The second asserion follows from the fact that the
functor \eqref{eq:comp etale vs rigid real2}
is compatible with the six operations and from the similar
description of $\Der^b_\gm(X,\overline\QQ_\ell)$
as the category of compact objects in $\QQ_{\ell,X}$-modules;
see \cite[Cor.~4.4.3]{CD3}.
\end{proof}

\begin{paragr}
From now on, we assume that $k$ is the algebraic closure of a finite field.
We consider the perverse $t$-structure (for the middle perversity) on
$\Der^b_c(X,\overline\QQ_\ell)$; see \cite{BBDG}.
Assume that $X$ is actually defined on some finite subfield $k_\circ\subset k$:
there is $X_\circ$ separated of finite type over $k_\circ$ and $X$ is obtained
from $X_\circ$ by pullback. There is then the $\infty$-category
$\Der^b_c(X_\circ,\overline\QQ_\ell)$
of constructible $\overline\QQ_\ell$-sheaves on $X_\circ$,
and its full subcategory $\Der^b_m(X_\circ,\overline\QQ_\ell)$ of \emph{mixed sheaves}.
The realization functor
\begin{equation}\label{eq:comp etale vs rigid real3}
\DM(X_\circ,\QQ)\to\Der^b_c(X_\circ,\overline\QQ_\ell)
\end{equation}
factors through mixed sheaves
\begin{equation}\label{eq:comp etale vs rigid real4}
\DM(X_\circ,\QQ)\to\Der^b_m(X_\circ,\overline\QQ_\ell)
\end{equation}
simply because \eqref{eq:comp etale vs rigid real3} commutes with the
six operations and $\DM(X_\circ,\QQ)$ is generated by the constant
objects $\QQ(n)$ under the six operations \cite[Cor. 4.4.3]{CD3} and, as proved by
Deligne~\cite{Weil2}, $\Der^b_m(X_\circ,\overline\QQ_\ell)$
is stable under the six operations in $\Der^b_c(X_\circ,\overline\QQ_\ell)$.

The theory of weights of $\overline\QQ_\ell$-sheaves on algebraic
varieties over a finite field provides:
\end{paragr}
\begin{thm}[Beilinson, Bernstein, Deligne, Gabber]\label{thm:BBD}
For any projective morphism $f:Y\to X$, with $Y$ smooth over $k$,
there is an isomorphism
\[
f_*(\overline\QQ_{\ell,Y})
\cong\bigoplus_i{^p\coh^i(f_*(\overline\QQ_{\ell,Y}))}[-i]
\]
and each perverse cohomology object ${^p\coh^i(f_*(\overline\QQ_{\ell,Y}))}$
is semi-simple.
\end{thm}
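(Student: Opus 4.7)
The plan is to descend the entire picture to a finite subfield $k_\circ\subset k$ and then invoke the decomposition theorem of Beilinson-Bernstein-Deligne-Gabber applied to a specific pure complex.

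First, by a standard spreading-out argument, I would find a finite subfield $k_\circ\subset k$ together with a projective morphism $f_\circ\colon Y_\circ\to X_\circ$ of separated $k_\circ$-schemes of finite type, with $Y_\circ$ smooth over $k_\circ$, whose pullback along $\spec k\to\spec k_\circ$ reproduces $f\colon Y\to X$. Pullback from $X_\circ$ to $X$ is $t$-exact for the perverse $t$-structure, commutes with $f_*$ by proper base change, and preserves direct sum decompositions, so it is enough to establish the analogous decomposition for $f_{\circ*}(\overline\QQ_{\ell,Y_\circ})$ over $X_\circ\otimes_{k_\circ}k$ and pull it back.

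Next, I would verify purity. By Deligne's purity theorem (Weil II, \cite{Weil2}), if $Y_\circ$ is smooth of pure dimension $d$ over $k_\circ$ (apply this componentwise in general), then $\overline\QQ_{\ell,Y_\circ}[d]$ is a pure perverse sheaf of weight $d$. Since $f_\circ$ is proper we have $f_{\circ*}=f_{\circ!}$; as $f_{\circ!}$ does not increase weights and $f_{\circ*}$ does not decrease them, this common functor preserves purity. Consequently $f_{\circ*}(\overline\QQ_{\ell,Y_\circ})[d]$ is a pure complex of weight $d$ on $X_\circ$.

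Finally, I would cite the two structural pillars of \cite{BBDG} for pure complexes on a variety over a finite field: (a) any pure complex on $X_\circ$ becomes non-canonically isomorphic, after pullback to $X$, to the direct sum of the shifts of its perverse cohomology sheaves, which follows from the vanishing of $\mathrm{Ext}^1$ between pure perverse sheaves of distinct weights after extension of scalars to $k$ (so the perverse Leray spectral sequence degenerates); and (b) each perverse cohomology sheaf of such a pure complex is, after pullback to $X$, semi-simple, i.e. a direct sum of intermediate extensions of irreducible lisse sheaves on locally closed smooth subvarieties. Applied to the pure complex $f_{\circ*}(\overline\QQ_{\ell,Y_\circ})[d]$ and then pulled back to $X$, these two assertions give precisely the stated decomposition of $f_*(\overline\QQ_{\ell,Y})$ as $\bigoplus_i{^p\coh^i(f_*(\overline\QQ_{\ell,Y}))}[-i]$ together with the semi-simplicity of each perverse cohomology summand (the shift by $[d]$ is harmless since it only re-indexes the direct sum).

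There is no genuine obstacle beyond invoking the right machinery: the geometric hypotheses (projectivity of $f$ and smoothness of $Y$) enter only to ensure that a suitable shift of $f_*(\overline\QQ_{\ell,Y})$ is pure, after which \cite{BBDG} handles both the splitting and the semi-simplicity as black boxes.
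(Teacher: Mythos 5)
Your proposal is correct and follows essentially the same route as the paper: descend to a finite subfield by spreading out, invoke Deligne's Weil~II purity to show $f_{\circ*}(\overline\QQ_{\ell,Y_\circ})$ (suitably shifted) is pure, and then cite the BBDG results on pure complexes (Th\'eor\`emes~5.3.8 and 5.4.5 of~\cite{BBDG}) for semi-simplicity of perverse cohomology and the decomposition after pullback to $X$. The extra details you give (the weight inequalities yielding purity of $f_{\circ*}=f_{\circ!}$, and the $\mathrm{Ext}^1$-vanishing explanation of the splitting) are accurate but are exactly the content of the cited references.
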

\begin{proof}
Indeed, we may assume that $f$ is the pullback of a projective morphism
$f_\circ\colon Y_\circ\to X_\circ$ defined over a finite subfield $k_\circ$
of $k$, with $Y_\circ$ smooth over $k_\circ$. We know from \cite[Prop. 6.2.7]{Weil2},
that the functor
\[
(f_\circ)_*:\Der^b_m(Y_\circ,\overline\QQ_\ell)\to\Der^b_m(Y_\circ,\overline\QQ_\ell)
\]
sends $\overline\QQ_{\ell,Y_\circ}$ to a pure object.
By virtue of \cite[Thm. 5.3.8]{BBDG}, pulling back to $X$,
we see that each perverse cohomology object
${^p\coh^i(f_*(\overline\QQ_{\ell,Y}))}$
is semi-simple. Similarly, the decomposition isomorphism
is obtained by applying \cite[Thm.~5.4.5]{BBDG}.
\end{proof}
\begin{cor}
For any $f:Y\to X$ projective with $Y$ smooth over $k$,
the perverse cohomology object ${^p\coh^i(f_*(\overline\QQ_{\ell,Y}))}$
belongs to $\Der^b_\gm(X,\overline\QQ_\ell)$.
\end{cor}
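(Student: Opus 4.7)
The plan is to combine the decomposition isomorphism of Theorem~\ref{thm:BBD} with the fact, established in Theorem~\ref{thm:everest categorique}, that $\Der^b_\gm(X,\overline\QQ_\ell)$ is a \emph{thick} subcategory of $\Der^b_c(X,\overline\QQ_\ell)$; in particular it is closed under shifts and direct factors.

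First, I would reduce to the case where $Y$ is connected. Since $Y$ is smooth over $k$, it is the disjoint union of its finitely many connected components $Y=\bigsqcup_j Y_j$, each of which is smooth over $k$; accordingly
\[
f_*(\overline\QQ_{\ell,Y})\cong\bigoplus_j (f|_{Y_j})_*(\overline\QQ_{\ell,Y_j}),
\]
and taking perverse cohomology commutes with finite direct sums. So it is enough to handle each $Y_j$ separately, i.e.\ we may assume $Y$ connected. Then $f_*(\overline\QQ_{\ell,Y})$ is by definition one of the generators in the description of the essential image of $\Der^b_\gm(X,\overline\QQ_\ell)\hookrightarrow\Der^b_c(X,\overline\QQ_\ell)$ given in Theorem~\ref{thm:everest categorique}, so it belongs to $\Der^b_\gm(X,\overline\QQ_\ell)$.

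Next, I would invoke Theorem~\ref{thm:BBD} to obtain the decomposition
\[
f_*(\overline\QQ_{\ell,Y})\cong\bigoplus_i{^p\coh^i(f_*(\overline\QQ_{\ell,Y}))}[-i]
\]
in $\Der^b_c(X,\overline\QQ_\ell)$. Since only finitely many of the summands are non-zero, each ${^p\coh^i(f_*(\overline\QQ_{\ell,Y}))}[-i]$ is a direct factor of $f_*(\overline\QQ_{\ell,Y})$. As $\Der^b_\gm(X,\overline\QQ_\ell)$ is thick inside $\Der^b_c(X,\overline\QQ_\ell)$, the summand ${^p\coh^i(f_*(\overline\QQ_{\ell,Y}))}[-i]$ lies in $\Der^b_\gm(X,\overline\QQ_\ell)$; shifting by $[i]$ (which also preserves $\Der^b_\gm$) then yields the conclusion.

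There is essentially no obstacle here beyond carefully assembling the two ingredients; the only subtle point is to make sure that the direct sum decomposition really takes place inside the \emph{stable} subcategory $\Der^b_\gm(X,\overline\QQ_\ell)$, and this is exactly what the thickness clause of Theorem~\ref{thm:everest categorique} (combined with the full faithfulness of the comparison functor) provides.
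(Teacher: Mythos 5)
Your proof is correct and takes essentially the same approach as the paper: observe that $f_*(\overline\QQ_{\ell,Y})$ lies in $\Der^b_\gm(X,\overline\QQ_\ell)$, invoke the decomposition isomorphism of Theorem~\ref{thm:BBD}, and conclude via idempotent completeness of $\Der^b_\gm(X,\overline\QQ_\ell)$. The only difference is that you spell out the reduction to connected $Y$ and route the first step through the generator description in Theorem~\ref{thm:everest categorique}, whereas the paper simply notes that $f_*(\overline\QQ_{\ell,Y})$ belongs to $\Der^b_\gm(X,\overline\QQ_\ell)$ and that the latter is idempotent complete by design.
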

\begin{proof}
The object $f_*(\overline\QQ_{\ell,Y})$ belongs to
$\Der^b_\gm(X,\overline\QQ_\ell)$ and the latter
is idempotent complete by design. 
\end{proof}
\begin{paragr}
We would like to prove that the perverse $t$-structure on
$\Der^b_c(X,\overline\QQ_\ell)$ restricts to a $t$-structure on
$\Der^b_\gm(X,\overline\QQ_\ell)$. The idea is that this should
follow formally from Theorem \ref{thm:BBD} and its corollary. This is the case,
in the sense that we will use abstract methods, but this requires
nevertheless rather sophisticated arguments and thus a little bit of preparation.
\end{paragr}
\begin{thm}[Keller \& Nicol\'a]\label{thm:kellernicola}Let $A$ be a
differential graded $\ZZ$-algebra such that:
\begin{enumerate}
\item[(a)] $\coh^i(A)=0$ for all $i<0$,
\item[(b)] the ring $\coh^0(A)$ is semi-simple, and
\item[(c)] each cohomology group $\coh^i(A)$ is finitely generated as a left
$\coh^0(A)$-module.
\end{enumerate}
Then the stable $\infty$-category $\mathit{Perf}(A)$ admits a bounded
$t$-structure with $\mathit{Perf}(A)^{\leq 0}$ (resp. $\mathit{Perf}(A)^{\geq 0}$)
the smallest full subcategory
containing $A$ closed under extensions, positive (resp. negative) shifts and
direct summands. Furthermore each object of the heart has finite length
and the simple objects of the heart precisely are
the indecomposable direct summands of A in $\mathit{Perf}(A)$.
\end{thm}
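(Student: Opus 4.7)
The plan is to verify the $t$-structure axioms directly on generators: condition (a) will supply orthogonality, condition (b) will identify the simples of the heart through a Krull--Schmidt decomposition, and condition (c) will enforce finite length.

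I would begin by exploiting (a) and (b) to lift a decomposition of $1 \in \coh^0(A)$ into primitive orthogonal idempotents to an analogous decomposition $1 = \sum_\alpha e_\alpha$ in $A$ itself, which is possible because the fibre of the connective cover $A \to \coh^0(A)$ is $1$-connective. This yields $A \cong \bigoplus_\alpha P_\alpha$ in $\mathit{Perf}(A)$ with each endomorphism dg-ring $\operatorname{End}(P_\alpha) \cong e_\alpha A e_\alpha$ connective and having $\coh^0$ a division ring; so each $P_\alpha$ has local endomorphism ring and is indecomposable, and its higher cohomology is finitely generated over that division ring by (c).

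Next, with $\mathit{Perf}(A)^{\leq 0}$ and $\mathit{Perf}(A)^{\geq 0}$ defined as in the statement, the orthogonality axiom reduces via closure under extensions and summands to $\Hom(A[m], A[-n]) = \coh^{-m-n}(A) = 0$ for $m \geq 0$ and $n \geq 1$, which is precisely condition (a). For the truncation triangle, given $M \in \mathit{Perf}(A)$ presented up to direct summand as a finite iterated extension of shifts $A[n_i]$, I would split the cells according to the sign of $n_i$ and reassemble by the octahedral axiom, with orthogonality ensuring that the connecting morphisms remain in the prescribed subcategories; idempotent completeness of $\mathit{Perf}(A)$ then propagates the construction to arbitrary summands. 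Boundedness is automatic since any cellular presentation uses only finitely many shifts.

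The main obstacle is identifying the heart and proving the finite-length property. Writing $\mathcal{H} = \mathit{Perf}(A)^{\leq 0} \cap \mathit{Perf}(A)^{\geq 0}$, the inclusions $P_\alpha \in \mathcal{H}$ are clear, and $\Hom_{\mathcal{H}}(P_\alpha, P_\beta) = 0$ for $\alpha \neq \beta$ follows from the orthogonality of the idempotents in $\coh^0(A)$; combined with locality of $\operatorname{End}(P_\alpha)$, this makes the $P_\alpha$ pairwise non-isomorphic simple objects of $\mathcal{H}$ via a Krull--Schmidt argument (any non-trivial subobject would produce a non-trivial idempotent of $\coh^0 \operatorname{End}(P_\alpha)$). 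Conversely, every $M \in \mathcal{H}$ is a finite iterated extension of the $P_\alpha$, proved by induction on the cellular length of a presentation: at each stage one uses the already constructed truncation functors to peel off a top $P_\alpha$-quotient. The delicate point, where condition (c) is genuinely needed, is verifying that this peeling process terminates in finitely many steps — this rests on the fact that the relevant mapping complexes between the $P_\alpha$ have finite-dimensional cohomology in each degree over the division rings $\coh^0 \operatorname{End}(P_\alpha)$, so that only finitely many composition factors can appear.
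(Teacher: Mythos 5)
The paper does not prove this statement; it simply cites Keller and Nicolás \cite[Thm.\ 8.1]{KeNi}. So there is no in-paper argument to compare against, and I can only evaluate your proof on its own merits. Your overall cellular strategy is reasonable and does resemble what one would do, but there are three concrete gaps.

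\textbf{The truncation step underuses hypothesis (b).} You claim that the reordering of cells works because ``orthogonality ensures that the connecting morphisms remain in the prescribed subcategories.'' This is only true when the two cells being commuted are separated by at least two cohomological degrees. The problematic case is a cell $A[-1]$ attached just above a cell $A[0]$ (equivalently, $A[m]$ above $A[n]$ with $m=0$, $n=-1$): the two-step extension then sits in a triangle $A[-1]\to X\to A\xrightarrow{\,\psi\,}A$ with $\psi\in\coh^0(A)$, and condition (a) gives you nothing. To proceed one has to use (b): since $\coh^0(A)$ is semi-simple, $\psi$ can be put in the form $\bigl(\begin{smallmatrix}u&0\\0&0\end{smallmatrix}\bigr)$ after splitting both the source and target copies of $A$ into direct summands (which is legitimate, as idempotents of $\coh^0(A)=\pi_0\operatorname{Map}(A,A)$ correspond to summand decompositions of $A$ in the idempotent-complete $\mathit{Perf}(A)$), whence $X$ splits as a summand with $u$ an isomorphism plus a free part. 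Without this additional splitting the octahedral reordering simply does not go through, and this is the only place in the whole argument where semi-simplicity is genuinely load-bearing; your write-up only invokes (b) to find primitive idempotents, which is peripheral.

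\textbf{The identification of the simples is incorrect as stated.} You assert that $\Hom_{\Cat H}(P_\alpha,P_\beta)=0$ for $\alpha\neq\beta$ and that the $P_\alpha$ are pairwise non-isomorphic. Neither holds: if $\coh^0(A)=\operatorname{Mat}_2(k)$ and $e_1,e_2$ are the two diagonal idempotents, then $\Hom(P_1,P_2)=e_2\coh^0(A)e_1\cong k\neq 0$ and indeed $P_1\cong P_2$. More seriously, your argument for simplicity of $P_\alpha$ --- ``any non-trivial subobject would produce a non-trivial idempotent of $\coh^0\operatorname{End}(P_\alpha)$'' --- fails: a subobject in the heart need not split, and a non-split monomorphism $Q\hookrightarrow P_\alpha$ produces no idempotent at all. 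Note that this is a real concern, not a pedantic one, since $\Ext^1_{\Cat H}(P_\alpha,P_\beta)\cong\coh^1(e_\beta A e_\alpha)$ is typically nonzero, so the heart is very far from semi-simple. You would instead want to argue, say, via the left-exact functor $\Hom_{\Cat H}(A,-)\colon\Cat H\to\coh^0(A)\text{-}\mathrm{Mod}$, which is faithful and sends $P_\alpha$ to a simple module; this gets you part of the way, but more work is still needed to show that a subobject $Q\hookrightarrow P_\alpha$ inducing an isomorphism after applying $\Hom(A,-)$ must itself be an isomorphism.

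\textbf{The role you assign to (c) is dubious.} You say (c) is what makes the ``peeling'' terminate, but once you know that every object of $\mathit{Perf}(A)$ is a retract of a \emph{finite} iterated extension of shifts of $A$, termination of the cellular reordering and the finiteness of the resulting filtration are automatic from perfection (plus the fact that $\coh^0(A)$, being semi-simple, decomposes $A$ into finitely many indecomposable summands). Hypothesis (c) presumably enters elsewhere --- for instance, in controlling $\operatorname{End}(P_\alpha)$ as a dg-algebra or ensuring the truncations of perfect complexes remain perfect in an argument that does not go through explicit cellular models; your sketch does not pinpoint where it is actually needed.
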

\begin{proof}
See \cite[Thm. 8.1]{KeNi}.
\end{proof}
\begin{prop}\label{prop:kellernicola}
Let $R$ be a commutative ring.
Let $\Cat C$ be a $R$-linear stable $\infty$-category
equipped with a bounded $t$-structure.
We assume that the following conditions hold:
\begin{enumerate}
\item[(a)] the heart is  both artinian and noetherian
(i.e. each object is of finite length).
\item[(b)] for all objects $x$ and $y$ of $\Cat C$,
$\Hom_{\Cat C}(x,y))$ is a perfect complex of $R$-modules.
\end{enumerate}
We fix a family $S$ of simple objects of the heart and consider
$\Cat C_S$, the smallest thick stable subcategory of $\Cat C$
containing the elements of $S$. Then the $t$-structure on $\Cat C$
restricts to a $t$-structure on $\Cat C_S$, in such a way that
the inclusion functor $\Cat C_S\to\Cat C$ is $t$-exact.
\end{prop}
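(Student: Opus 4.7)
The plan is to apply the Keller--Nicol\'a theorem (Theorem~\ref{thm:kellernicola}) to the derived endomorphism algebra of a finite sum of the simples in $S$, identify the resulting $\mathit{Perf}(A)$ with $\Cat C_S$, and then transfer the $t$-structure. The first reduction is to the case where $S$ is finite: by definition $\Cat C_S$ is the filtered union of the subcategories $\Cat C_{S'}$ for finite $S' \subset S$, and a $t$-structure that restricts to each $\Cat C_{S'}$ automatically restricts to $\Cat C_S$, because any $x \in \Cat C_S$ lies in some $\Cat C_{S'}$ and its $\Cat C$-truncations then lie in $\Cat C_{S'} \subset \Cat C_S$. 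So I may assume $S = \{s_1, \dots, s_n\}$ with the $s_i$ pairwise non-isomorphic.

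Set $s = \bigoplus_i s_i$ and $A = \derR\mathrm{End}_{\Cat C}(s)$, viewed as an $E_1$-algebra over $R$. I verify the three hypotheses of Theorem~\ref{thm:kellernicola}. First, since each $s_i$ sits in the heart, $\Hom_{\Cat C}(s_i, s_j[m]) = 0$ for $m < 0$, hence $\coh^i(A) = 0$ for $i < 0$. Second, Schur's lemma applied inside the abelian heart (every nonzero morphism between simple objects is an isomorphism) gives $\coh^0(A) = \prod_i \mathrm{End}_{\Cat C^\heartsuit}(s_i)$, a finite product of division rings, which is semi-simple. Third, by hypothesis (b) each $\coh^i(A)$ is the cohomology of a perfect complex of $R$-modules, hence finitely generated over $R$, and a fortiori over $\coh^0(A)$. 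The theorem then produces a bounded $t$-structure on $\mathit{Perf}(A)$ whose connective part is generated by $A$ under extensions, positive shifts, and direct summands.

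Next, I transfer this $t$-structure to $\Cat C_S$ via a Morita-style comparison. The exact functor $F = \derR\Hom_{\Cat C}(s, -) \colon \Cat C \to \mathit{Mod}_A$ sends $s$ to $A$ tautologically. Since the source and target are stable and the target is idempotent complete, $F$ preserves thickness; fully faithfulness on the thick subcategory generated by $s$ is checked on that single generator, where the induced map is the identity on $A$. This identifies $\Cat C_S \simeq \mathit{Perf}(A)$ and yields a bounded $t$-structure on $\Cat C_S$ whose connective part $\Cat C_S^{\leq 0}$ is the smallest subcategory containing $s$ and closed under extensions, positive shifts, and retracts.

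Finally, I compare this $t$-structure with the restriction from $\Cat C$. Since $s$ lies in $\Cat C^\heartsuit \subset \Cat C^{\leq 0}$, and since $\Cat C^{\leq 0}$ is stable under extensions, positive shifts and retracts, we obtain $\Cat C_S^{\leq 0} \subset \Cat C^{\leq 0}$, and dually $\Cat C_S^{\geq 0} \subset \Cat C^{\geq 0}$. For any $x \in \Cat C_S$, the triangle $\tau^{\leq 0}_{\Cat C_S} x \to x \to \tau^{\geq 1}_{\Cat C_S} x$ thus has its first term in $\Cat C^{\leq 0}$ and its third in $\Cat C^{\geq 1}$; by uniqueness of truncation triangles in $\Cat C$, it is the $\Cat C$-truncation triangle of $x$. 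Hence the $\Cat C$-truncations of $x$ land in $\Cat C_S$, proving that the $t$-structure on $\Cat C$ restricts to $\Cat C_S$ and that the inclusion is $t$-exact. The step I expect to require the most care is the Morita identification $\Cat C_S \simeq \mathit{Perf}(A)$: it depends on having a well-defined $A$-module structure on the mapping spectrum $\derR\Hom_{\Cat C}(s, -)$ (for which the perfectness hypothesis (b) is essential) and on the standard fact that perfect $A$-modules form the smallest thick subcategory of $\mathit{Mod}_A$ containing $A$ itself.
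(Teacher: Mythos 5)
Your proof is correct and takes essentially the same route as the paper: form $A=\derR\mathrm{End}(s)$ for $s=\oplus_i s_i$, verify the three Keller--Nicol\'a hypotheses, identify $\Cat C_S\simeq\mathit{Perf}(A)$, transfer the $t$-structure, then handle arbitrary $S$ via a filtered union over finite subsets. You are in fact more careful than the paper in two spots where the author is telegraphic: you give the Morita-style d\'evissage argument behind the identification $\Cat C_S\simeq\mathit{Perf}(A)$ (which the paper dismisses with ``by construction''), and you actually prove the $t$-exactness of $\Cat C_S\hookrightarrow\Cat C$ by comparing aisles and invoking uniqueness of truncation triangles (which the paper declares ``thus clear''). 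One small inaccuracy in your closing remark: the existence of an $A$-module structure on $\derR\Hom_{\Cat C}(s,-)$ does not depend on hypothesis (b); that functor and module structure exist unconditionally for any stable $\infty$-category. What (b) really buys you is that $A$ is a perfect $R$-complex, hence that each $\coh^i(A)$ is finitely generated over $R$ and a fortiori over $\coh^0(A)$, which is hypothesis (c) of Theorem~\ref{thm:kellernicola}.
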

\begin{proof}
We prove the case where $S$ consists of finitely many objects.
We can form the object $\sigma=\oplus_{s\in S}s$
and form the algebra of its endomorphisms
\[
A={\Hom(\sigma,\sigma)}\, .
\]
This is a dg $R$-algebra with the following properties:
\begin{enumerate}
\item[(i)] $\coh^0(A)$ is semi-simple.
\item[(ii)] $\coh^i(A)=0$ for $i<0$,
\item[(iii)] it is perfect as a complex of $R$-modules. In particular,
each cohomology group $\coh^i(A)$ is of finite presentation as a $R$-module.
\end{enumerate}
That means that we can apply Theorem \ref{thm:kellernicola}
and get a $t$-structure on $\mathit{Perf}(A)$.
But, by construction, we have $\mathit{Perf}(A)\cong\Cat C_S$,
hence a $t$-structure on $\Cat C_S$,
with $\Cat C_S^{\leq 0}$ (resp. $\Cat C_S^{\geq 0}$)
the smallest full subcategory
containing the elements of $S$,
closed under extensions, positive (resp. negative) shifts and
direct summands. It is thus clear that the inclusion
$\Cat C_S\to\Cat C$ is $t$-exact.

To prove the general case, we write $S$ as a filtered union of finite
subsets $T$, so that we get
\[
\varinjlim_{T\subset S}\Cat C_T\cong\Cat C_S\to \Cat C\, .
\]
Each $\Cat C_T$ has a $t$-structure such that the inclusion
$\Cat C_T\to\Cat C$ it $t$-exact, and each transition map in the diagram
$\{\Cat C_T\}_{T\subset S}$ it $t$-exact. There is thus a unique $t$-structure
on $\Cat C_S$ such that each inclusion $\Cat C_T\to\Cat C_S$ it $t$ exact.
The induced functor $\Cat C_S\to\Cat C$ is then obviously $t$-exact as well.
\end{proof}
\begin{cor}\label{cor:geometric perverse}
For $k$ the algebraic closure of a finite field and $X$ separated of finite
type over $k$, for any prime $\ell\neq\mathit{char}(k)$, the perverse $t$-structure on
$\Der^b_c(X,\overline\QQ_\ell)$ restricts to a $t$-structure on
$\Der^b_\gm(X,\overline\QQ_\ell)$. The heart of
$\Der^b_\gm(X,\overline\QQ_\ell)$ consists precisely of those
perverse sheaves of geometric origin: this is the smallest full
subcategory of the abelian category of perverse sheaves in $\Der^b_c(X,\overline\QQ_\ell)$
closed under finite sums as well as under
subquotients containing objects of the
form ${^p\coh^i(f_*(\overline\QQ_{\ell,Y}))}(n)$ for
any projective morphism $f\colon Y\to X$, with $Y$ smooth over $k$, and $i,n\in\ZZ$.
\end{cor}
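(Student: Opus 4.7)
The plan is to apply Proposition~\ref{prop:kellernicola} with $R=\overline\QQ_\ell$, with $\Cat C=\Der^b_c(X,\overline\QQ_\ell)$ equipped with the perverse $t$-structure, and with $S$ the family of simple perverse sheaves that occur as indecomposable direct summands of objects of the form ${^p\coh^i(f_*(\overline\QQ_{\ell,Y}))}(n)$ for $f\colon Y\to X$ projective with $Y$ smooth over $k$ (and $i,n\in\ZZ$). By Theorem~\ref{thm:BBD} those perverse cohomology objects are semi-simple, so $S$ really does consist of simple objects of the heart.

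First I would check the two hypotheses of Proposition~\ref{prop:kellernicola}. Condition~(a) is the standard fact that perverse sheaves on a scheme of finite type over an algebraically closed field form a noetherian and artinian abelian category (finiteness of length); this is part of the BBDG formalism. Condition~(b) asks that $\derR\Hom(x,y)$ be a perfect complex of $\overline\QQ_\ell$-modules for $x,y\in\Der^b_c(X,\overline\QQ_\ell)$; this is the finiteness theorem for $\ell$-adic cohomology of constructible sheaves on varieties over the algebraic closure of a finite field.

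Next I would identify the resulting $\Cat C_S$ with $\Der^b_\gm(X,\overline\QQ_\ell)$. The inclusion $\Cat C_S\subseteq\Der^b_\gm(X,\overline\QQ_\ell)$ uses the corollary following Theorem~\ref{thm:BBD}, which says the perverse cohomology objects (and hence their indecomposable summands, by idempotent completeness of $\Der^b_\gm$) lie in $\Der^b_\gm(X,\overline\QQ_\ell)$, together with thickness of the latter. The reverse inclusion uses the decomposition isomorphism of Theorem~\ref{thm:BBD}: every $f_*(\overline\QQ_{\ell,Y})(n)$ for $f$ projective and $Y$ smooth splits as a finite direct sum of shifts of objects in $S$, hence lies in $\Cat C_S$; by Theorem~\ref{thm:everest categorique} such objects generate $\Der^b_\gm(X,\overline\QQ_\ell)$ as a thick stable subcategory, so $\Der^b_\gm(X,\overline\QQ_\ell)\subseteq\Cat C_S$. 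Proposition~\ref{prop:kellernicola} then gives the restricted $t$-structure and $t$-exactness of the inclusion into $\Der^b_c(X,\overline\QQ_\ell)$.

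Finally I would identify the heart. By the last sentence of Theorem~\ref{thm:kellernicola}, transported through Proposition~\ref{prop:kellernicola}, the heart of $\Der^b_\gm(X,\overline\QQ_\ell)$ consists of the perverse sheaves of finite length whose simple constituents lie in $S$; equivalently, it is the smallest full subcategory of perverse sheaves closed under finite sums and subquotients and containing the ${^p\coh^i(f_*(\overline\QQ_{\ell,Y}))}(n)$, which is the description stated. I expect the main technical obstacle to be the bookkeeping around the passage from finite to arbitrary $S$ in Proposition~\ref{prop:kellernicola}, since $S$ here is a proper class; but this is handled by the filtered-colimit argument in the proof of that proposition, so in practice the only substantive input is the semi-simplicity/decomposition statement of Theorem~\ref{thm:BBD}.
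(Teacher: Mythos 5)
Your proposal is correct and follows exactly the route the paper takes: its proof is precisely the one-line invocation of Theorem~\ref{thm:BBD} and Proposition~\ref{prop:kellernicola} with $S$ the simple direct factors of the ${^p\coh^i(f_*(\overline\QQ_{\ell,Y}))}(n)$; you have simply spelled out the verification of hypotheses (a) and (b) of Proposition~\ref{prop:kellernicola}, the identification $\Cat C_S=\Der^b_\gm(X,\overline\QQ_\ell)$ via Theorem~\ref{thm:everest categorique} and the decomposition theorem, and the bookkeeping on the heart, all of which the paper leaves implicit. One small inaccuracy: $S$ is a set (there are only set-many isomorphism classes of constructible perverse sheaves on $X$, and the parameter space of $(f,i,n)$ is a set), not a proper class, so no foundational issue arises; the filtered-colimit step in Proposition~\ref{prop:kellernicola} is still what handles infinite $S$, as you say.
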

\begin{proof}
This is clearly a consequence of Theorem \ref{thm:BBD}
and of Proposition \ref{prop:kellernicola},
with $S$ the collection of those simple perverse sheaves
that occur as a direct factor of ${^p\coh^i(f_*(\overline\QQ_{\ell,Y}))}(n)$
for some projective morphism $f:Y\to X$,
with $Y$ connected and smooth over $k$, and $n\in\ZZ$.
\end{proof}
\begin{rem}
The corollary above means that $\Der^b_\gm(X,\overline\QQ_\ell)$
consists of usual $\overline\QQ_\ell$-sheaves of geometric origin:
those $K$ in $\Der^b_c(X,\overline\QQ_\ell)$ such that each
perverse cohomology object ${^p\coh^i(K)}$ is of geometric origin
as a perverse sheaf.
We can characterize the perverse $t$-structure on $\Der^b_\gm(X,\overline\QQ_\ell)$
as follows (for all $\ell\neq \mathit{char}(k)$).
An object $K$ of $\Der^b_\gm(X,\overline\QQ_\ell)$
belongs to ${^p}\Der^{\leq 0}_\gm(X,\overline\QQ_\ell)$ if and only if there is
a finite stratification of $X$ by locally closed subschemes
such that each stratum $U$ is smooth and connected over $k$, and such that
each restriction of $K$ on $U$ is dualizable (i.e. lisse),
and, for any closed point $x$ of $U$, the fiber of $K$ at $x$
is in $\Der^{\leq -d}(\overline\QQ_\ell)$
where $d$ is the dimension of $U$. In particular, we see that
the induced $t$-structure on $\Der^b_\gm(X,\overline \QQ_\ell)$
is determined by functoriality by the fact that it coincides with the
usual standard $t$-structure on
$\mathit{Perf}(\overline \QQ_\ell)$.
\end{rem}

\begin{rem}\label{rem:Weil etale}
Recall that $\Der^b_\gm(-,\overline{\QQ}_\ell)$, $\Der^b_c(-,\overline{\QQ}_\ell)$
and $\DM_{c}(-,\QQ)$
all are hypersheaves of $\QQ$-linear stable $\infty$-categories
(this follows straight away from \cite[thm.~3.3.37]{CD3}
since the six operations preserve constructibility
in all these cases by \cite[Thm.~4.2.29]{CD3}). Therefore,
many properties of these categories extend to the case where $X$
is any (hyper)sheaf of $\infty$-groupoids on the site of schemes
of finite type over $k$ with respect to the $\h$-topology.

Let us assume that $k$ is the algebraic closure of a finite field
with $q$ elements $k_\circ=\mathbf{F}_q$.
Given any $k_\circ$-scheme of finite type $X_\circ$, we denote by
\[
\phi_X\colon X\to X
\]
the geometric $q$-Frobenius of $X$ (that is a morphism of $k$-schemes).
One defines $X^\mathit{Weil}$ as the quotient of $X$ by the $\ZZ$-action induced by
the $q$-Frobenius $\phi_X$. For any $\h$-hypersheaf of $\infty$-categories $\Der$
over $k$, the $\infty$-category $\Der(X^\mathit{Weil})$ is thus the
$\infty$-category of objects $F$ in $\Der(X)$ equipped with an equivalence
\[
\sigma\colon \phi_X^*(F)\cong F
\]
There is a quotient map $\pi\colon X\to X^\mathit{Weil}_\circ$
inducing a forgetful functor
\[
\pi^*\colon\Der(X^\mathit{Weil}_\circ)\to\Der(X)\ , \quad (F,\sigma)\mapsto F
\]
If moreover $\Der(X)$ is stable and is equipped with a $t$-structure
and if $\phi^*_X$ is $t$-exact, then
there is a unique $t$-structure on $\Der(X^\mathit{Weil}_\circ)$ so that the
forgetful functor above is $t$-exact and induces an equivalence
between the heart of $\Der(X^\mathit{Weil}_\circ)$ and
the abelian category obtained from the above construction
replacing $\Der(-)$ by its heart $\Der(-)^\heartsuit$;
see \cite[Lemma~2.2]{richarz_scholbach2024}.
In particular, we have the stable $\infty$-category
$\Der^b_c(X^\mathit{Weil}_\circ,\overline{\QQ}_\ell)$ of
$\ell$-adic Weil sheaves on $X_\circ$.
It contains the full subcategory
$\Der^b_\gm(X^\mathit{Weil}_\circ,\overline{\QQ}_\ell)$.
The latter consists of those Weil sheaves whose underlying sheaf on $X$ is
constructible of geometric origin. Both $\Der^b_c(X^\mathit{Weil},\overline{\QQ}_\ell)$
and $\Der^b_\gm(X^\mathit{Weil},\overline{\QQ}_\ell)$ have a
perverse $t$-structure
and the inclusion
\[
\Der^b_\gm(X^\mathit{Weil}_\circ,\overline{\QQ}_\ell)\subset
\Der^b_c(X^\mathit{Weil}_\circ,\overline{\QQ}_\ell)
\]
is $t$-exact. This makes precise the slogan that understanding
$\ell$-adic cohomology of smooth schemes of finite type over 
the finite field $k_\circ$ amounts to unerstanding the system
of coefficients 
\[
X_\circ\mapsto\Der^b_\gm(X^\mathit{Weil}_\circ,\overline{\QQ}_\ell)\, .
\]
Therefore, standard conjectures on motives should imply through tannakian arguments
that the coefficient system $\Der^b_\gm(X^\mathit{Weil}_\circ,\overline{\QQ}_\ell)$
itself
should be independent of~$\ell$.
\end{rem}

\section{Counter-examples in mixed characteristic}
The goal of this section is to prove that Proposition~\ref{prop:petit everest}
is not true in mixed characteristic.
\begin{thm}\label{thm:non-everest}
Let $S$ be a noetherian scheme of dimension $1$
and generically of characteristic zero.
We consider a closed point $s$ in the regular locus of $S$.
We assume furthermore that the residue field of $s$ is
algebraically closed and of positive 
characteristic, and that the local ring $\mathcal{O}_{S,s}$ has
at most the cardinality of the continuum. We denote by
\[
i\colon s\to S
\]
the induced immersion and we choose a prime number $\ell$ that is distinct from
the characteristic of the residue field of $s$.
\begin{itemize}
\item[(i)] If $j\colon U\to S$ denotes the complement of $s$ in $S$
then the canonical comparison map
\[
j_!(\ZZ_U)\otimes^\derL\ZZ_{\ell,S}\to j_!(\ZZ_U)_\ell
\]
is not an equivalence.
\item[(ii)] The canonical comparison map
\[
i^*(\ZZ_{\ell,S})\to\ZZ_{\ell,s}
\]
is not an equivalence.
\end{itemize}
\end{thm}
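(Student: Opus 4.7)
I would first show that the two assertions are equivalent, so it suffices to disprove (ii). Applying $\ell$-adic completion to the localization cofiber sequence $j_!(\ZZ_U)\to\ZZ_S\to i_*(\ZZ_s)$ and using that $i_*$ preserves limits yields the fiber sequence $j_!(\ZZ_U)_\ell\to\ZZ_{\ell,S}\to i_*(\ZZ_{\ell,s})$. Pulling back by $i^*$ and using $i^*i_*\cong\mathrm{id}$ gives
\[
i^*j_!(\ZZ_U)_\ell \;\to\; i^*\ZZ_{\ell,S}\;\to\;\ZZ_{\ell,s}.
\]
Meanwhile $i^*\bigl(j_!(\ZZ_U)\otimes^\derL\ZZ_{\ell,S}\bigr)\cong i^*j_!(\ZZ_U)\otimes^\derL i^*\ZZ_{\ell,S}=0$, since $i^*$ is symmetric monoidal and $i^*j_!=0$. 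Because $j$ is an open immersion, $j^*$ is both a left and a right adjoint, hence commutes with all limits and colimits, and both sides of the comparison map in (i) restrict on $U$ to $\ZZ_{\ell,U}$ via the identity. By the recollement, (i) holds iff its $i^*$-image is an equivalence, iff $i^*j_!(\ZZ_U)_\ell=0$, iff (ii) holds.

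\textbf{Setting up the forbidden realization under (ii).} Assume (ii) for contradiction. For any smooth proper $S$-scheme $\tilde X$ with special fiber $X=i^*\tilde X$, smooth base change gives $i^*\ZZ_{\tilde X}\cong\ZZ_X$; together with $i^*\ZZ_{\ell,S}\cong\ZZ_{\ell,s}$, this identifies the specialization map $\coh^*_\ell(\tilde X,\QQ_\ell)\to\coh^*_\ell(X,\QQ_\ell)$ with the effect of $i^*$ on motivic Hom-sets into $\ZZ_{\ell,S}$. The cardinality hypothesis on $\mathcal{O}_{S,s}$ (at most the continuum, with $k$ algebraically closed of cardinality at most the continuum) allows one to fix an embedding of $\mathrm{Frac}(\mathcal{O}_{S,s})$ into $\CC$ by a Steinitz/Ax-style transcendence argument. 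Combined with $\h$-hyperdescent and compatibility with the six operations, this embedding together with (ii) is designed to produce a comparison between $\ell$-adic cohomology of smooth proper $k$-schemes and a cohomology theory of purely characteristic-zero origin, equivalently a $\QQ$-rational refinement of $\coh^*_\ell(X,\QQ_\ell)$ compatible with algebraic correspondences on $X$.

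\textbf{Serre-style contradiction.} Apply the would-be refinement to a supersingular elliptic curve $E$ over $k$, lifted to a smooth proper $S$-scheme $\tilde E$ by deformation theory of elliptic curves (shrinking $S$ around $s$ if necessary). The algebra $B=\mathrm{End}(E)\otimes\QQ$ is a quaternion division algebra over $\QQ$ ramified exactly at $p$ and $\infty$, and acts faithfully on $\coh^1_\ell(E,\QQ_\ell)$ via the splitting $B\otimes\QQ_\ell\cong M_2(\QQ_\ell)$. The $\QQ$-substructure extracted above would carry a faithful $B$-action on a two-dimensional $\QQ$-vector space, i.e.\ an embedding $B\hookrightarrow M_2(\QQ)$; by dimension count this would force $B\cong M_2(\QQ)$, contradicting the fact that $B$ is a division algebra. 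The main obstacle is the preceding step: extracting a genuine $\QQ$-rational refinement of $\ell$-adic cohomology of $k$-schemes from the assumption (ii). This is the mixed-characteristic motivic avatar of Serre's classical non-existence of a Weil cohomology with $\QQ$-coefficients in positive characteristic, and the cardinality hypothesis on $\mathcal{O}_{S,s}$ is what makes the passage through $\CC$ available for the transfer.
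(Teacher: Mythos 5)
Your reduction of (i) to (ii) is correct and essentially the same as the paper's: the paper uses the morphism of $\ell$-adic-completion cofiber sequences directly, while you argue through the recollement, but the content is identical.

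The critical issue is the one you flag yourself: ``extracting a genuine $\QQ$-rational refinement of $\ell$-adic cohomology of $k$-schemes from the assumption (ii).'' That is not a detail to be glossed---it is the entire technical content of the proof, and your sketch does not supply it. You invoke ``$\h$-hyperdescent and compatibility with the six operations'' as if they magically convert the hypothesis $i^*\ZZ_{\ell,S}\cong\ZZ_{\ell,s}$ into a comparison with a characteristic-zero theory, but nothing in your argument produces such a theory over $k$. The specialization map $\coh^*(\tilde X,\QQ_\ell)\to\coh^*(X,\QQ_\ell)$ you describe is already an isomorphism for $\tilde X$ smooth proper (proper and smooth base change), so by itself it gives no new structure. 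What is missing is a ring-spectrum-level bridge from $\ell$-adic cohomology on the special fiber $s$ to a cohomology theory defined intrinsically over $s$ in characteristic zero.

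The paper constructs precisely this bridge. It passes to the completion $T=\spec\widehat{\mathcal{O}}_{S,s}$ with fraction field $K\subset\CC$, and uses the Berthelot--Ogus morphism of $E_\infty$-ring spectra $\jmath_*\mathit{DR}\to\imath_*\mathit{Rig}$ (de Rham to Monsky--Washnitzer/rigid, as in \cite[Thm.~3.2.6]{CD2}) together with the chain $\QQ_{\ell,\overline\eta}\otimes_{\QQ_\ell}\CC\cong\mathit{Betti}_\QQ\otimes\CC\cong\mathit{DR}_{\mathrm{an}}$. Composing with the unit map $\QQ_{\ell,T}\to\jmath_*\pi_*\QQ_{\ell,\overline\eta}$ yields a map of ring spectra $\imath^*\QQ_{\ell,T}\to\mathit{Rig}\otimes_K\CC$ in $\DM(s,\QQ)$. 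Assumption (ii) then upgrades this to a morphism $\QQ_{\ell,s}\otimes_{\QQ_\ell}\CC\to\mathit{Rig}\otimes_K\CC$ between mixed Weil cohomologies, which by the uniqueness theorem \cite[Cor.~17.2.15, Prop.~17.2.16]{CD3} is an isomorphism. The $\QQ$-structure on $\ell$-adic cohomology then falls out because the resulting comparison isomorphism depends only on the embedding $\QQ_\ell\subset\CC$, while $\coh^j_\rig(X/K)\otimes_K\CC$ does not, so $\operatorname{Aut}(\overline\QQ_\ell)$ acts functorially on $\coh^j_\et(X,\overline\QQ_\ell)$ with fixed subspace the Betti $\QQ$-structure $\coh^j(Y(\CC),\QQ)$.

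Once that machinery is in place, your supersingular-elliptic-curve endgame is the right sort of argument (it is a variant of Serre's), though the paper phrases its contradiction via $\coh^2$ and Galois-equivariance rather than the $B\hookrightarrow M_2(\QQ)$ dimension count on $\coh^1$. So: the high-level shape of your proposal matches the paper, but the step you concede you cannot fill---the rigid-cohomology comparison via Berthelot--Ogus plus the uniqueness of mixed Weil cohomologies---is precisely the non-formal core, and as written your argument does not prove the theorem.
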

\begin{paragr}
Recall that
the $\infty$-category $\DM(X,\QQ)$ can be described as
$$\DM(X,\QQ)\cong\DA(X,\QQ)$$
where $\DA(X,\QQ)$ is the $\mathbf{P}^1$-stablilization of the
$\mathbf{A}^1$-localization of the derived \mbox{$\infty$-cat}\-egory of
\'etale sheaves of $\QQ$-vector spaces on the smooth-\'etale
site of $X$~\cite[Theorem 16.2.18]{CD3}.
We have
\begin{equation}
\QQ_{\ell,X}=\ZZ_{\ell,X}\otimes\QQ
\end{equation}
which is now an $E_\infty$-ring spectrum in $\DM(X,\QQ)$.
\end{paragr}
\begin{paragr}\label{paragr:rigid}
We will denote by
\[
R=\widehat{\mathcal{O}}_{S,s}
\]
the completion of the local ring ${\mathcal{O}}_{S,s}$ and define
$f\colon T=\spec R\to S$ as the canonical map. Let $K$ be the
field of fractions of $R$ and $\eta=\spec K$.\\

Since the local ring $\mathcal{O}_{S,s}$ has at most the cardinality of the continuum,
the same is true for its completion $R$ as well as for $K$ itself.
Using the axiom of choice, we choose, once and for all, an
embedding of $K$ into the field of complex numbers.
\begin{equation}
K\subset\CC
\end{equation}
We thus get the following diagram of schemes
\begin{equation}\label{basic diagram}
\begin{split}
\xymatrix{
&&\overline{\eta}\ar[d]^{\pi}\\
s\ar[r]^{\imath}\ar[dr]^i&T\ar[d]^f&\eta\ar[l]_{\jmath}\\
&S
}
\end{split}
\end{equation} 
where $\pi:\overline{\eta}=\spec\CC\to\eta$ is the map induced by $K\subset\CC$
and $\imath\colon s\to T$ as well as $\jmath\colon\eta\to T$ are the canonical
embeddings.
\end{paragr}
\begin{paragr}
We define $\mathit{Betti}$ as the $E_\infty$-ring spectrum in $\DM(\overline{\eta},\ZZ)$
representing Betti cohomology with $\ZZ$-coefficients,
obtained by applying the right adjoint of Ayoub's Betti
realization \cite{Betti} to the
constant sheaf, for instance; see \cite[Example 17.1.7]{CD3}.
For any commutative ring $\Lambda$, its $\Lambda$-linear version
simply is
\[
\mathit{Betti}_\Lambda=\mathit{Betti}\otimes^\derL\Lambda\, .
\]
There is also the analytic de Rham spectrum $\mathit{DR}_{an}$
representing analytic de Rham cohomology of smooth complex algebraic
varieties \cite[Example 17.2.22]{CD3}.
There is a canonical comparison isomorphism
\begin{equation}
\mathit{Betti}_\QQ\otimes_\QQ\CC\cong \mathit{DR}_{an}
\end{equation}
because one can always promote morphisms of mixed Weil cohomologies
to isomorphisms of rings spectra \cite[Cor.~17.2.15]{CD3}.

Let $\ell$ be a prime number distinct from the
characteristic of $k$. There is a functorial isomorphism
$$\derR\Gamma(X_\et,\ZZ/\ell^\nu)\cong\derR\Gamma(X(\CC),\ZZ/\ell^\nu\ZZ)$$
inducing a functorial morphism of $E_\infty$-algebras
$$\derR\varprojlim_\nu\derR\Gamma(X_\et,\ZZ/\ell^\nu)\otimes\QQ\cong
\Big(\derR\varprojlim_\nu\derR\Gamma(X(\CC),\ZZ/\ell^\nu\ZZ)\Big)\otimes\QQ
\leftarrow
\derR\Gamma(X(\CC),\ZZ)\otimes\QQ$$
This defines an isomorphism of mixed Weil cohomologies
\begin{equation}
\mathit{Betti}_\QQ\otimes\QQ_\ell\cong\QQ_{\ell,\overline{\eta}}
\end{equation}
by virtue of \cite[Cor.~17.2.15]{CD3}.
\end{paragr}

\begin{paragr}\label{paragr:rigid_DR}
There is algebraic de Rham cohomology $\mathit{DR}$,
the $E_\infty$-ring spectrum in the category $\DM(\eta,\QQ)$ that represents
algebraic de Rham cohomology of smooth algebraic varieties over $K$.
We have a canonical isomorphism of $E_\infty$-ring spectra
\begin{equation}
\mathit{DR}\otimes_K\CC\cong\pi_*\mathit{DR}_{\mathrm{an}}
\end{equation}
which is induced by the obvious maps
$$\derR\Gamma(X,\Omega^*_{X/K})\otimes_K\CC\to
\derR\Gamma(X^{\mathrm{an}},\Omega^*_{X^{\mathrm{an}}/\CC})$$
and applying \cite[Cor.~17.2.15]{CD3}.
There is also the Monsky-Washnitzer cohomology $\mathit{MW}$ in $\DM(T,\QQ)$
that is isomorphic to $\imath_*\mathit{Rig}$, where $\mathit{Rig}$ is the
$E_\infty$-ring spectrum that represents rigid cohomology in $\DM(s,\QQ)$.
The Berthelot-Ogus map
\begin{equation}
\jmath_*\mathit{DR}\to \mathit{MW}\cong \imath_*\mathit{Rig}
\end{equation}
is a morphism of $K$-linear $E_\infty$-ring spectra in $\DM(T,\QQ)$
(see \cite[Theorem 3.2.6]{CD2}).
Therefore,
after choosing a morphism of fields $\QQ_\ell\to\CC$
we obtain morphisms of $E_\infty$-ring spectra
in $\DM(S,\QQ)$:
\begin{equation}\label{equ:compare rig0}
\jmath_*\pi_*\QQ_{\ell,\overline{\eta}}\otimes_{\QQ_\ell}\CC
\cong \jmath_*\pi_*\mathit{Betti}_\QQ\otimes\CC\cong \jmath_*\pi_*\mathit{DR}_{\mathrm{an}}
\cong \jmath_*\mathit{DR}\otimes_K\CC\to \imath_*\mathit{Rig}\otimes_K\CC\, .
\end{equation}
There are also canonical maps of $E_\infty$-ring spectra
\begin{equation}\label{equ:compare rig1}
\QQ_{\ell,T}\to\jmath_*\QQ_{\ell,\eta}\To \jmath_*\pi_*\QQ_{\ell,\overline{\eta}}\, .
\end{equation}
Composing \eqref{equ:compare rig0}
and \eqref{equ:compare rig1}, we thus get
a morphism of $E_\infty$-spectra
\begin{equation}\label{equ:compare rig2}
\QQ_{\ell,T}\To \imath_*\mathit{Rig}\otimes_K\CC\, .
\end{equation}
By adjunction, this defines a morphism of $E_\infty$-ring spectra
\begin{equation}\label{equ:compare rig3}
\imath^*\QQ_{\ell,T}\To\mathit{Rig}\otimes_K\CC
\end{equation}
in $\DM(S,\QQ)$. 
\end{paragr}

\begin{proof}[Proof of Theorem~\ref{thm:non-everest}]
Using the morphism of cofiber sequences
$$\xymatrix{
j_!(\ZZ_{\ell,\eta})\ar[r]\ar[d]&\ZZ_{\ell,S}\ar[r]\ar@{=}[d]&i_*i^*(\ZZ_{\ell,S})\ar[d]\\
j_!(\ZZ_\eta)_\ell\ar[r]&\ZZ_{\ell,S}\ar[r]&i_*(\ZZ_{\ell,s})
}$$
together with the fact that $i_*$ is a fully faithful right adjoint,
we see that assertions (i) and (ii) are equivalent.
We will prove assertion (ii).
Let us assume that we have a canonical isomorphism
of $E_\infty$-ring spectra of the form
\begin{equation}\label{equ:compare rig4}
i^*\QQ_{\ell,S}\cong \QQ_{\ell,s}\, .
\end{equation}
Using the canonical map $f^*\QQ_{\ell,S}\to\QQ_{\ell,T}$,
this produces a morphism of motivic $E_\infty$-ring spectra of the form
\begin{equation}\label{equ:compare rig4bis}
\QQ_{\ell,s}\cong
i^*\QQ_{\ell,S}\cong \imath^*\, f^*\QQ_{\ell,S}\to\imath^*\QQ_{\ell,T}\, .
\end{equation}
Then map \eqref{equ:compare rig3} and morphism \eqref{equ:compare rig4bis}
induce a $\CC$-linear morphism of $E_\infty$-ring spectra
$$\QQ_{\ell,s}\otimes_{\QQ_\ell}\CC\To\mathit{Rig}\otimes_K\CC\, .$$
between mixed Weil cohomologies. By
\cite[Cor.~17.2.15 and Prop.~17.2.16]{CD3}, this is thus an isomorphism
of mixed Weil cohomologies (one can check that the reconstruction theorem provided by \cite[Prop.~17.2.16]{CD3} is functorial).
In particular, for any smooth scheme $X$ over $s$ and any integer $i$, we get a functorial
isomorphism of the form:
\begin{equation}\label{equ:compare rig5}
\coh^{j}_\et(X,\QQ_\ell(i))\otimes_{\QQ_\ell}\CC\cong\coh^j_\rig(X/K)\otimes_K\CC\, .
\end{equation}
For $X$ of good reduction, we may choose a smooth and proper model $Y$ of X over $T$.
We have the following commutative diagram 
\[
\xymatrix{
\coh^j(Y,f^*\QQ_{\ell,S})\ar[r]\ar[d]&\coh^j_\et(Y,\QQ_\ell)\ar[r]\ar[d]&
\coh^j(Y,\imath_*\mathit{Rig})\otimes_K\CC\ar[d]^\cong\\
\coh^j(X,\imath^*f^*\QQ_{\ell,S})\ar[r]\ar[dr]_\cong
&\coh^j_\et(X,\imath^*\QQ_{\ell,T})\ar[r]\ar[d]&
\coh^j_\rig(X/K)\otimes_K\CC\\
&\coh^j_\et(X,\QQ_\ell)\ar[ur]_{\text{\eqref{equ:compare rig5}}}&
}
\]
in which the vertical maps
relating the first line to the second one are induced by applying the functor
$\imath^*\colon\DM(T)\to\DM(s)$ to the diagram
\[
f^*\QQ_{\ell,S}\to\QQ_{\ell,T}\to\imath_*\mathit{Rig}\otimes_K\CC\, .
\]
Moreover, by the proper base change formula, we know that the middle vertical
composed map is an isomorphism:
\[
\coh^j_\et(Y,\QQ_\ell)\cong\coh^j_\et(X,\QQ_\ell)\, .
\]
Using \eqref{equ:compare rig0}, this means that
isomorphism \eqref{equ:compare rig5} is in this case
a composition of classical isomorphisms: specialization relating
$\coh^j_\et(X,\QQ_\ell)$ and $\ell$-adic cohomology of the generic fiber of $Y$,
identification of the latter with Betti cohomology, isomorphism with analytic
de Rham cohomology, Grothendieck's isomorphism between analytic and
algebraic de Rham cohomology, and finally the Berthelot--Ogus isomorphism with
crystalline cohomology of the special fiber.
Note furthermore that isomorphism \eqref{equ:compare rig5} relies on one choice
only: the choice of an embedding of fields $\QQ_\ell\subset\CC$.
Since $\coh^j_\rig(X/K)\otimes_K\CC$ does not depend on the choice of such an embedding,
this means that the action of the
group of automorphisms of the field~$\overline{\QQ}_\ell$
on 
\[
\coh^j_\et(Y\times_T\bar{\eta},\overline{\QQ}_\ell)
\cong\coh^j(Y(\CC),\overline{\QQ}_\ell)\cong
\coh^j(Y(\CC),\QQ)\otimes\overline{\QQ}_\ell
\]
is functorial in the (pure) motive of $X$.
In particular, we can consider the case where
$X$ a supersingular elliptic curve over $s$ that is the fiber
of some elliptic curve $Y$ over $S$.
In this case, the action of $\mathrm{End}(X)$ on
\[
\coh^j_\et(Y,\overline{\QQ}_\ell)
\cong\coh^j_\et(X,\overline{\QQ}_\ell)
\]
induces an action on the
$\QQ$-vector subspace $\coh^j(Y(\CC),\QQ)$. Since this is known not to be true for $j=2$,
we have reached a contradiction.
\end{proof}
\bibliographystyle{amsalpha}
\bibliography{lfin}
\end{document}